\newtheorem{thm}{Theorem}[section]
\newtheorem{prop}[thm]{Proposition}
\newtheorem{lem}[thm]{Lemma}
\newtheorem{cor}[thm]{Corollary}
\numberwithin{equation}{section}
\theoremstyle{definition}
\newtheorem{remark}[thm]{Remark}
\newcommand{\qqed}{\hspace*{\fill}$\Box$}
\newcommand{\Db}{{\rm D}^{\rm b}}
\newcommand{\Mot}{{\rm Mot}}
\newcommand{\mot}{{\rm mot}}
\newcommand{\CH}{{\rm CH}}
\newcommand{\NS}{{\rm NS}}
\newcommand{\rk}{{\rm rk}}
\newcommand{\Spec}{{\rm Spec}}
\newcommand{\cal}{\mathcal}
\newcommand{\ke}{{\cal E}}
\newcommand{\LL}{\mathbb{L}}
\newcommand{\ZZ}{\mathbb{Z}}
\newcommand{\QQ}{\mathbb{Q}}
\newcommand{\FF}{\mathbb{F}}
\newcommand{\PP}{\mathbb{P}}
\newcommand{\hh}{\mathfrak{h}}
\renewcommand{\to}{\xymatrix@1@=15pt{\ar[r]&}}
\renewcommand{\rightarrow}{\xymatrix@1@=15pt{\ar[r]&}}
\renewcommand{\mapsto}{\xymatrix@1@=15pt{\ar@{|->}[r]&}}
\renewcommand{\twoheadrightarrow}{\xymatrix@1@=18pt{\ar@{->>}[r]&}}
\renewcommand{\hookrightarrow}{\xymatrix@1@=15pt{\ar@{^(->}[r]&}}
\newcommand{\hook}{\xymatrix@1@=15pt{\ar@{^(->}[r]&}}
\newcommand{\congpf}{\xymatrix@1@=15pt{\ar[r]^-\sim&}}
\renewcommand{\cong}{\simeq}
\begin{document}

\title[]{Motives of derived equivalent K3 surfaces}

\author[D.\ Huybrechts]{D.\ Huybrechts}

\address{Mathematisches Institut,
Universit{\"a}t Bonn, Endenicher Allee 60, 53115 Bonn, Germany}
\email{huybrech@math.uni-bonn.de}

\begin{abstract} \noindent
We observe that derived equivalent K3 surfaces have isomorphic Chow motives.
 \vspace{-2mm}
\end{abstract}

\maketitle

{\let\thefootnote\relax\footnotetext{The author is supported by the SFB/TR 45 `Periods,
Moduli Spaces and Arithmetic of Algebraic Varieties' of the DFG
(German Research Foundation).}
\marginpar{}
}

There are many examples of non-isomorphic K3 surfaces $S$ and $S'$ (over a field $k$)
with equivalent derived categories   $\Db(S)\cong\Db(S')$ of coherent sheaves. It is known that in this case
$S'$ is  isomorphic to a moduli space of slope-stable bundles on $S$ and vice versa, cf. \cite{HuyJAG,Mu,Or}.
However, the precise relation between $S$ and $S'$ still eludes us and
many basic questions are hard to answer. For example, one could ask whether $S'$ contains
infinitely many rational curves, expected for all K3 surfaces, if  this is known already for $S$. Or, if $k$ is a number field,
does potential density for  $S$ imply potential density for $S'$?

As a direct geometric approach to these questions seems difficult, one may wonder whether 
$S$ and $S'$ can be compared at least motivically.  This could mean various things, e.g.\
one could study their classes $[S]$ and $[S']$ in the Grothendieck  ring of varieties $K_0({\rm Var}(k))$ 
or  their associated motives $\hh(S)$ and $\hh(S')$ in the category of Chow motives $\Mot(k)$.
In the recent articles \cite{HL,IMOU,KS}, examples in degree $8$ and $12$ have been studied
for which $[S]-[S']$ in $K_0({\rm Var}(k))$ is annihilated by $\ell\coloneqq[{\mathbb A}^1]$. The question
whether the Chow motives $\hh(S)$ and $\hh(S')$ in $\Mot(k)$
are isomorphic was first addressed and answered affirmatively in special cases in \cite{Orlov}.
Assuming finite-dimensionality of the motives the question was settled in \cite{DPPed}.

\medskip

In this short note we point out that the available techniques in the theory of motives are enough
to show that two derived equivalent K3 surfaces have indeed isomorphic Chow motives.

\begin{thm}\label{thm:main}
Let $S$ and $S'$ be K3 surfaces over an algebraically closed field $k$. Assume that there
exists an exact $k$-linear equivalence $\Db(S)\cong\Db(S')$ between their bounded derived categories of coherent sheaves.
Then there is an isomorphism $$\hh(S)\cong\hh(S')$$
in the category of Chow motives $\Mot(k)$.
\end{thm}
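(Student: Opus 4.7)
The plan is to use the Fourier--Mukai kernel of the given derived equivalence to exhibit mutually inverse correspondences between the transcendental parts of $\hh(S)$ and $\hh(S')$, bypassing the finite-dimensionality hypothesis of \cite{DPPed}.

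By Orlov's representability theorem, the equivalence $\Db(S)\cong\Db(S')$ is of Fourier--Mukai type with some kernel $\ke\in\Db(S\times S')$. Let $\ke'\in\Db(S'\times S)$ be the kernel of the quasi-inverse (for K3 surfaces one may take $\ke'=\ke^{\vee}[2]$), so that $\ke'\star\ke\cong\ko_{\Delta_S}$ and $\ke\star\ke'\cong\ko_{\Delta_{S'}}$. I would then invoke the refined Chow--K\"unneth decomposition for a K3 surface over an algebraically closed field,
$$\hh(S)\cong\mathbf{1}\oplus\mathbf{1}(-1)^{\oplus\rho(S)}\oplus t(S)\oplus\mathbf{1}(-2),$$
with explicit projectors in $\CH^2(S\times S)_\QQ$, and likewise for $S'$. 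Since derived equivalent K3 surfaces have Hodge-isometric Mukai lattices (Mukai, Orlov), the Picard ranks satisfy $\rho(S)=\rho(S')$ and the Tate summands already match. The problem reduces to producing an isomorphism of transcendental motives $t(S)\cong t(S')$.

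Form the Mukai vectors $v(\ke)=\mathrm{ch}(\ke)\cdot\sqrt{\mathrm{td}(S\times S')}\in\CH^*(S\times S')_\QQ$ and $v(\ke')$, and define
$$\gamma\coloneqq\pi^t_{S'}\circ v(\ke)\circ\pi^t_S\colon t(S)\to t(S'),\qquad\gamma'\coloneqq\pi^t_S\circ v(\ke')\circ\pi^t_{S'}\colon t(S')\to t(S).$$
The key input is the standard multiplicativity of Mukai vectors under convolution of kernels, $v(\ke')\circ v(\ke)=v(\ke'\star\ke)=v(\ko_{\Delta_S})$, which is a direct consequence of Grothendieck--Riemann--Roch and is precisely what the square-root-Todd-class normalization is designed to enforce. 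Because $S$ is K3 (so $c_1(S)=0$), the component of $v(\ko_{\Delta_S})$ in $\CH^2(S\times S)_\QQ$ is the full diagonal class $[\Delta_S]$; all Todd corrections land in strictly higher codimension and therefore induce correspondences raising the cohomological degree out of the middle, so they are annihilated when sandwiched by the transcendental projectors $\pi^t_S$. Hence $\gamma'\circ\gamma=\pi^t_S\circ[\Delta_S]\circ\pi^t_S=\pi^t_S=\id_{t(S)}$, and symmetrically $\gamma\circ\gamma'=\id_{t(S')}$.

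The main obstacle is the degree bookkeeping in the final step: when one expands $\gamma'\circ\gamma$ in terms of the homogeneous components $v_i(\ke),\,v_j(\ke')$ of the Mukai vectors, one must check carefully that only the $(i,j)=(2,2)$ contribution survives the double transcendental-projector sandwich, and that this contribution really equals $\pi^t_S$. The pleasant feature of the approach is that it trades the conjectural finite-dimensionality of the motive of a K3 surface, used in \cite{DPPed}, for the concrete identity $\ke'\star\ke\cong\ko_{\Delta_S}$ together with the classical Mukai-vector formalism.
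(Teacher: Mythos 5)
Your overall strategy is viable and is in fact the alternative route sketched (and attributed to Vial) in the paper's closing remarks: instead of checking an identity of correspondences, the paper's actual proof reduces to the transcendental summand and then applies Lemma \ref{lem:folk}, verifying that $v_2(\ke)$ induces a bijection on $\CH^*(\hh^2_{\rm tr}(S)_K)=\CH^2(S_K)_0\otimes\QQ$ for every field extension $K/k$; the key observation there is that the cohomologically trivial part of the Chow ring of a K3 surface is concentrated in degree two, so on that part $v_2(\ke_K)$ acts exactly as the full ungraded isomorphism $v(\ke_K)_*$. Your approach trades this base-change lemma for an explicit computation with projectors, which is a genuinely different (and arguably more self-contained) mechanism. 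However, as written the decisive computation is only announced, not carried out, and that is where the actual content lies.

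Concretely, three things must be supplied. First, $\gamma=\pi^t_{S'}\circ v(\ke)\circ\pi^t_S$ is not a morphism $t(S)\to t(S')$ in $\Mot(k)$, because $v(\ke)$ is inhomogeneous; only $v_2(\ke)\in\CH^2(S\times S')\otimes\QQ$ defines a morphism $\hh(S)\to\hh(S')$, so $\gamma$ should be $\pi^t_{S'}\circ v_2(\ke)\circ\pi^t_S$. Second, $\gamma'\circ\gamma$ contains the projector $\pi^t_{S'}$ sandwiched in the middle, so it is \emph{not} literally $\pi^t_S\circ v(\ke')\circ v(\ke)\circ\pi^t_S$; one must argue that inserting $1-\pi^t_{S'}=\pi^{\rm alg}_{S'}$ contributes zero, which follows because $\hh_{\rm alg}(S')$ is a sum of Tate motives and ${\rm Hom}(t(S),\LL^i)=0$ for all $i$ (a lemma from \cite{KMP}, not a formality). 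Third, the identity $v(\ke')\circ v(\ke)=[\Delta_S]$ mixes degrees: the codimension-two part of the composite is $\sum_{i+j=4}v_j(\ke')\circ v_i(\ke)=[\Delta_S]$, and one must kill the cross terms $(i,j)\ne(2,2)$ after sandwiching. This does work --- for $i\in\{0,1\}$ one has $v_i(\ke)\circ\pi^t_S=0$ because the transcendental projector annihilates $\CH^0$ and $\CH^1$ (so its pushforwards against classes pulled back from either factor vanish), and the cases $i\in\{3,4\}$ are handled by the transposed statement on the left --- but this is precisely the verification you defer in your last paragraph. Your remark that the Todd corrections to $v(\ko_{\Delta_S})$ sit in higher codimension is correct but addresses only a small part of the bookkeeping. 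None of these gaps is fatal; once they are filled your argument is complete and, unlike \cite{DPPed}, requires no finite-dimensionality hypothesis, matching what the paper achieves by the base-change route.
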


The assumption on the field $k$ can be weakened, it suffices to assume
that $\rho(S)=\rho(S_{\bar k})$. 

\newpage

We had originally expected that the invariance of the Beauville--Voisin ring as
proved in \cite{HuyJEMS,HuyMSRI} would be central to the argument However, it turns to out to have no bearing on the problem, but
it implies that a distinguished decomposition of the motives in their algebraic and transcendental parts is preserved under derived equivalence.
\vskip0.6cm

\noindent
{\bf Acknowledgements:} I am very grateful to Charles Vial for answering my questions and to him and Andrey Soldatenkov for comments  on a first version and helpful suggestions.

\section{From derived categories to Chow motives}

We follow the convention and notation in \cite{A,MNP} and denote by $\Mot(k)$ the pseudo-abelian
category of Chow motives. Objects of this category are triples $M=(X,p,m)$, where $X$ is a
smooth projective $k$-variety of dimension $d_X$, $p\in\CH^{d_X}(X\times X)\otimes\QQ$ with $p\circ p=p$, and $m\in \ZZ$.
Morphisms $f\colon M=(X,p,m)\to N=(Y,q,n)$ are elements $\alpha\in\CH^{d_X+n-m}(X\times Y)\otimes\QQ$ such that $q\circ \alpha=\alpha=\alpha\circ p$.

The Chow motive of a smooth projective variety $X$ is denoted
$\hh(X)\coloneqq(X,[\Delta_X])\coloneqq(X,[\Delta_X],0)$.
The Lefschetz motive $\LL\coloneqq (\PP^1,\PP^1\times {\rm pt},0)\cong({\rm Spec}(k),{\rm id},-1)$
plays a distinguished role, e.g.\ the Chow groups of a motive $M=(X,p,m)$ are defined to be 
$\CH^i_\QQ(M)\coloneqq{\rm Mor}(\LL^{\otimes i},M)=p\circ \CH^i(M)\otimes\QQ$.

\subsection{} The following  result combines \ \cite[Lem.\ 1]{GG} and \cite[Lem.\ 3.2]{Vial}, cf.\ \cite[Lem.\ 4.3]{Ped}.

\begin{lem}\label{lem:folk}
Let $f\colon M\to N$ be a morphism in the category $\Mot(k)$ of Chow motives over a field $k$.
Then $f$ is an isomorphism  if and only if for all field extensions $K/k$ the
map $f_{K*}\colon\CH^*_\QQ(M_K)\congpf\CH^*_\QQ(N_K)$ induced by the base change
$f_K\colon M_K\to N_K$ is bijective. \qed
\end{lem}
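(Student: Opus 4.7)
The ``only if'' direction is immediate: any isomorphism in $\Mot(k)$ base changes to an isomorphism in $\Mot(K)$, and isomorphisms in $\Mot(K)$ induce bijections on the functorial Chow groups $\CH^*_\QQ$.

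For the converse, the plan is to invoke \emph{Manin's identity principle} (Yoneda in $\Mot(k)$): a morphism $f\colon M\to N$ is an isomorphism if and only if for every object $T\in\Mot(k)$ the induced map $\Hom_{\Mot(k)}(T,M)\to\Hom_{\Mot(k)}(T,N)$ is bijective. Since every Chow motive is a direct summand of some $\hh(Z)(n)$ with $Z$ smooth projective, it suffices to verify this for $T=\hh(Z)(n)$. Writing $M=(X,p,m)$ and $N=(Y,q,\ell)$, this amounts to showing that $f_*$ induces a bijection on the projector-cut-out parts of $\CH^*_\QQ(Z\times X)\to\CH^*_\QQ(Z\times Y)$ for every smooth projective $k$-variety $Z$.

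I would proceed by induction on $d\coloneqq\dim Z$. The case $d=0$ reduces $Z$ to a finite disjoint union of spectra $\Spec k_i$ of finite extensions $k_i/k$, so the claim is precisely the hypothesis applied to $K=k_i$. For the inductive step, the localization sequence for Chow groups, together with the identification $\CH^*_\QQ(X_{k(Z)})=\varinjlim_{U\subset Z}\CH^*_\QQ(X\times U)$ over non-empty opens, yields a right-exact sequence
$$\bigoplus_{Y\subsetneq Z}\CH^*_\QQ(X\times Y)\longrightarrow \CH^*_\QQ(X\times Z)\longrightarrow \CH^*_\QQ(X_{k(Z)})\longrightarrow 0,$$
and the analogous sequence for $Y$ in place of $X$ fits into a commutative ladder under $f_*$. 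The right vertical arrow is an isomorphism by the hypothesis applied to $K=k(Z)$. For each proper closed $Y\subsetneq Z$, I would choose a smooth projective alteration $\tilde Y\to Y$ (de Jong); since $\QQ$-coefficients split generically finite covers, the inductive hypothesis applied to $\tilde Y$ (of dimension $<d$) controls the leftmost vertical. A diagram chase then forces surjectivity of the central arrow, and the symmetric argument---any kernel class vanishes on the generic fibre and is therefore supported on some $X\times Y$ with $Y\subsetneq Z$---gives injectivity.

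The delicate point will be the injectivity half of the inductive step: one must lift a kernel class from $\CH^*_\QQ(X\times Y)$ along an alteration $\tilde Y\to Y$ while respecting the projectors $p$ and $q$, and this uses crucially that passing to $\QQ$-coefficients turns the degree of an alteration into a splitting. This interplay of the localization sequence, Manin's principle, and alterations is essentially the combined content of \cite[Lem.\ 1]{GG} and \cite[Lem.\ 3.2]{Vial} cited above.
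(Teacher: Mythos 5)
Your overall strategy --- Manin's identity principle plus induction on the dimension of the test variety, passing to the generic point $K=k(Z)$ via the localization sequence and using alterations with $\QQ$-coefficients --- is precisely the route the paper indicates (it gives only the one-line hint ``Manin's identity principle and induction over the dimension'' and defers to \cite[Lem.\ 1]{GG} and \cite[Lem.\ 3.2]{Vial}), and your surjectivity argument is a correct sketch of that route. The genuine problem is the injectivity half of your inductive step, which you flag as delicate but which the splitting of degrees by $\QQ$-coefficients does not repair. Suppose $\alpha\in\Hom(\hh(Z)(n),M)$ satisfies $f\circ\alpha=0$. Injectivity of $f_{k(Z)*}$ shows that $\alpha$ restricts to zero on the generic fibre, hence is a sum of push-forwards from proper closed subsets, and alterations let you write $\alpha=\sum_j e_j^{-1}(\pi_j)_*\tilde\beta_j$ with $\tilde\beta_j\in\Hom(\hh(\tilde Y_j)(\ast),M)$ and $\dim\tilde Y_j<\dim Z$. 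Applying $f$ and the projection formula gives $\sum_j e_j^{-1}(\pi_j)_*(f\circ\tilde\beta_j)=0$, i.e.\ only the \emph{sum of the push-forwards to $Z$} of the classes $f\circ\tilde\beta_j$ vanishes. Since push-forward from the $\tilde Y_j$ to $Z$ is far from injective, this does not yield $f\circ\tilde\beta_j=0$, so the inductive hypothesis (injectivity over the lower-dimensional $\tilde Y_j$) is never applicable and the chase does not close.

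The standard repair --- and what the two cited lemmas actually do --- is to run the induction \emph{only for surjectivity} and then upgrade formally. The surjectivity half (which uses only surjectivity of $f_{K*}$) shows that $f\circ(-)\colon\Hom(T,M)\to\Hom(T,N)$ is onto for every $T$; taking $T=N$ and hitting $\id_N$ produces $g\colon N\to M$ with $f\circ g=\id_N$. Then $f_{K*}\circ g_{K*}=\id$ and bijectivity of $f_{K*}$ forces $g_{K*}$ to be bijective for all $K$, so $g$ satisfies the same hypothesis and the same argument yields $h$ with $g\circ h=\id_M$; from $h=(f\circ g)\circ h=f\circ(g\circ h)=f$ one gets $g\circ f=\id_M$, so $f$ is an isomorphism. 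Two smaller points: you use $Y$ both for the variety underlying $N$ and for the closed subsets of $Z$, which should be disentangled; and over an imperfect field de Jong's alterations are only regular rather than smooth, so one should either assume $k$ perfect or invoke the refinement permitting a finite purely inseparable extension of the base.
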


One direction is obvious and for the other one use Manin's identity principle and induction over the dimension.

\subsection{} We start by describing  Orlov's approach \cite{Or}.
Let $\Phi\colon\Db(S)\congpf\Db(S')$ be an exact $k$-linear
equivalence between the bounded derived categories of coherent sheaves on two
K3 surfaces $S$ and $S'$. Then $\Phi$ is a Fourier--Mukai functor, i.e.\ $\Phi\cong\Phi_\ke\colon
F\mapsto p_*(q^*F\otimes\ke)$ for some $\ke\in\Db(S\times S')$  (unique up to isomorphism), see \cite{Orlov}. The Mukai vector 
$$v(\ke)\coloneqq {\rm ch}(\ke).\sqrt{{\rm td}(S\times S')}\in\CH^*(S\times S')\otimes\QQ$$
induces an (ungraded) isomorphism $v(\ke)_*\colon\CH^*(S)\congpf\CH^*(S')$, cf.\ \cite[Ch.\ 16]{HuyK3}.\footnote{For K3 surfaces
the isomorphism is indeed between integral Chow groups, but this is not true for other types of varieties.}

The inverse of $\Phi$ can be described as $\Phi^{-1}\cong\Phi_{\ke^*[2]}$, where $\ke^*$ is the derived dual of $\ke$. In particular,
$v(\ke^*)\circ v(\ke)=[\Delta_S]$ and $v(\ke)\circ v(\ke^*)=[\Delta_{S'}]$. Thus, if $v(\ke)$ is purely of degree two, i.e.\
$v(\ke)=v_2(\ke)\in\CH^2(S\times S')\otimes\QQ$,
then $v(\ke)$ can be considered as a morphism in $\Mot(k)$ and defines
an isomorphism $$v(\ke)\colon\hh(S)\congpf\hh(S').$$
However, a priori there is no reason why for two derived equivalent K3 surfaces $S$ and $S'$ there should
always exist an equivalence $\Phi_\ke\colon\Db(S)\congpf\Db(S')$ with $v(\ke)=v_2(\ke)$.
In fact,
when $S'$ is viewed as a moduli space of slope-stable bundles the universal family $\ke$ will certainly not
have this property, for $v_0(\ke)=\rk(\ke)$.

Next, one could try to just work with the degree two part $v_2(\ke)\in\CH^2(S\times S')\otimes\QQ$ of $v(\ke)$.
It defines a morphism $\hh(S)\to\hh(S')$ in $\Mot(k)$, which, however, will usually not be an isomorphism, for
the induced action on cohomology $v_{2*}^H\colon H^*(S)\to H^*(S')$ is often not bijective.

\subsection{} Instead of producing an isomorphism $\hh(S)\congpf\hh(S')$ directly, we shall first decompose
both motives with respect to their degree. The decompositions depend on the choice
of  additional cycles of degree one $c\in\CH^2(S)\otimes\QQ$ and $c'\in\CH^2(S')$, respectively, cf.\
\cite{A,MNP}. For $S$ it
reads \begin{equation}\label{eqn:hh}
\hh(S)\cong\hh^0(S)\oplus\hh^2(S)\oplus\hh^4(S),
\end{equation}
where $\hh^0(S)\coloneqq (S,p_0=[c\times S])$, $\hh^4(S)\coloneqq (S,p_4=[S\times c])$,
and $\hh^2(S)\coloneqq (S,p_2\coloneqq[\Delta_S]-p_0-p_4)$. 
Note that $\hh^0(S)\cong\hh(\Spec(k))\cong\LL^0$ and $\hh^4(S)\cong\LL^2$. Furthermore,
$$\CH^*_\QQ(\hh^0(S))=\CH^0(S)\otimes\QQ,~~\CH^*_\QQ(\hh^4(S))=\QQ\cdot c\subset\CH^2(S)\otimes\QQ,$$
$$\text{ 
 and }\CH^*_\QQ(\hh^2(S))=(\CH^1(S)\oplus\CH^2(S)_0)\otimes\QQ,$$
where $\CH^2(S)_0\coloneqq{\rm Ker}(\deg\colon \CH^2(S)\to\ZZ)$.

Following \cite{KMP}, the degree two part $\hh^2(S)$ can be further decomposed into its algebraic and
transcendental parts
\begin{equation}\label{eqn:hhtr}
\hh^2(S)\cong\hh^2_{\rm alg}(S)\oplus\hh^2_{\rm tr}(S).
\end{equation}
Here, $\hh^2_{\rm alg}(S)=(S,p_{2\rm alg})$ and $\hh^2_{\rm tr}(S)=(S,p_{\rm tr})$ with
$p_{2\rm alg}\coloneqq\sum 1/(\ell_i)^2 \ell_i\times\ell_i$, for  an orthogonal basis $\{\ell_i\}$ of
$\NS(S)\otimes\QQ$, and $p_{\rm tr}\coloneqq 1-p_{2\rm alg}-p_0-p_4$.
Note that
 $$\CH^*_\QQ(\hh^2_{\rm tr}(S))=\CH^2(S)_0\otimes\QQ  \text{ and } \CH^*_\QQ(\hh^2_{\rm alg}(S))=\CH^1(S)\otimes\QQ\cong{\rm NS}(S)\otimes\QQ$$
and, in fact, $\hh^2_{\rm alg}(S)\cong\LL^{\oplus\rho(S)}$.

 We shall call $\hh_{\rm alg}(S)\coloneqq\hh^0(S)\oplus\hh^2_{\rm alg}(S)\oplus\hh^4(S)=(S,p_{\rm alg}:=p_0+p_4+p_{2\rm alg})$
 the \emph{algebraic motive} of $S$. Then 
\begin{equation}\label{eqn:hhalg}
\hh_{\rm alg}(S)\cong\LL^0\oplus\LL^{\oplus \rho(S)}\oplus\LL^2
\end{equation}
and $$\hh(S)\cong\hh_{\rm alg}(S)\oplus\hh^2_{\rm tr}(S).$$

Next, under a field extension $K/k$ satisfying  $\rho(S)=\rho(S_K)$, one has
$\hh_{\rm alg}(S)_K\cong \hh_{\rm alg}(S_K)$
and, therefore, $\hh^2_{\rm tr}(S)_K\cong\hh^2_{\rm tr}(S_K)$.
In particular,  $\CH^*(\hh^2_{\rm tr}(S)_K)=\CH^2(S_K)_0\otimes\QQ$.

\subsection{} As  derived equivalent K3 surfaces have equal Picard numbers,
by (\ref{eqn:hhalg}) their algebraic motives are abstractly
(i.e.\ without choosing an actual equivalence)  isomorphic:
$$\Db(S)\cong\Db(S')\Rightarrow\hh_{\rm alg}(S)\cong\hh_{\rm alg}(S').$$
Thus, in order to prove Theorem \ref{thm:main}, it suffices to show the following.

\begin{prop} If $S$ and $S'$ are derived equivalent K3 surfaces, then there exists
an isomorphism
$$\hh_{\rm tr}^2(S)\congpf\hh^2_{\rm tr}(S').$$
\end{prop}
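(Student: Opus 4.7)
Let $\ke\in\Db(S\times S')$ represent the given equivalence $\Phi$, so that $\Phi^{-1}\cong\Phi_{\ke^*[2]}$ and the identities $v(\ke^*[2])\circ v(\ke)=[\Delta_S]$ and $v(\ke)\circ v(\ke^*[2])=[\Delta_{S'}]$ hold in rational Chow. The codimension-two component $v_2(\ke)\in\CH^2(S\times S')\otimes\QQ$ is a morphism $\hh(S)\to\hh(S')$ in $\Mot(k)$, and I would set
\[ \phi\coloneqq p_{\rm tr,S'}\circ v_2(\ke)\circ p_{\rm tr,S}\colon\hh^2_{\rm tr}(S)\to\hh^2_{\rm tr}(S'), \]
and define $\psi$ analogously using $v_2(\ke^*[2])$. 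By Lemma~\ref{lem:folk} applied to $\phi$, it suffices to prove that the induced map $\phi_{K*}$ on Chow groups is bijective for every field extension $K/k$.

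The crux is that for $\alpha\in\CH^2(S_K)_0\otimes\QQ$ the action of $v(\ke_K)$ reduces to that of $v_2(\ke_K)$: the components $v_3,v_4$ vanish for codimension reasons; $v_0(\ke_K)_*\alpha$ is a scalar multiple of $\deg(\alpha)\cdot[S'_K]=0$; and using that $\Pic^0$ is trivial on K3 surfaces, the K\"unneth decomposition $\NS(S\times S')\otimes\QQ=\NS(S)\otimes\QQ\oplus\NS(S')\otimes\QQ$ writes $v_1(\ke)=\pi_S^*a+\pi_{S'}^*b$, whence a direct computation gives $v_1(\ke_K)_*\alpha=\deg(\alpha)\cdot b=0$. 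Hence $v(\ke_K)_*\alpha=v_2(\ke_K)_*\alpha$, and the analogous statement holds for $\ke^*[2]$. Combined with the total-Chow identity $v(\ke_K^*[2])_*\circ v(\ke_K)_*=\id$, this yields $\psi_{K*}\circ\phi_{K*}=\id$ on $\CH^2(S_K)_0\otimes\QQ$ once algebraic-valued corrections (coming from $p_{0,S'}$, $p_{4,S'}$, $p_{2\mathrm{alg},S'}$) are absorbed by the outer transcendental projector $p_{\rm tr,S}$. The same argument on the $S'$ side shows $\phi_{K*}$ is bijective on the $\CH^2_0$-summand.

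The remaining summand of $\CH^*_\QQ(\hh^2_{\rm tr}(S)_K)$ appears only when the Picard rank jumps under $K/k$: it is the orthogonal complement of $\NS(S)\otimes\QQ$ inside $\NS(S_K)\otimes\QQ$. On this piece one again checks that the components $v_0$ and $v_1$ of $v(\ke_K)$ contribute trivially (the K\"unneth splitting shows $v_1$-contributions land in algebraic Chow groups either of $S_K$ or of $S'_K$ that are already killed by the respective transcendental projectors), so $\phi_{K*}$ agrees here too with $p_{\rm tr,S'}\circ v(\ke_K)_*$ and is bijective by the same inversion argument with $\psi_{K*}$.

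The main technical obstacle is the bookkeeping for the Picard-jumping case, since the projector $p_{\rm tr,S}$ is built from a basis of $\NS(S)\otimes\QQ$ rather than $\NS(S_K)\otimes\QQ$, so $\CH^*_\QQ(\hh^2_{\rm tr}(S)_K)$ decomposes into several blocks that must be matched block-by-block with those of $\CH^*_\QQ(\hh^2_{\rm tr}(S')_K)$. The essential tool throughout is the K\"unneth splitting of $v_1(\ke)$, which guarantees that $v(\ke_K)_*$ respects the algebraic--transcendental decomposition in the precise form needed to pass from the known isomorphism on total Chow to the desired bijection on the transcendental Chow summands.
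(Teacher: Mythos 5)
Your argument is essentially the paper's own: define the map as $p_{\rm tr,S'}\circ v_2(\ke)\circ p_{\rm tr,S}$, invoke Lemma~\ref{lem:folk}, and reduce to the observation that on degree-zero cycles $\alpha\in\CH^2(S_K)_0\otimes\QQ$ the action of the full Mukai vector $v(\ke_K)$ agrees with that of $v_2(\ke_K)$, so that bijectivity follows from the known ungraded isomorphism $v(\ke_K)_*\colon\CH^*(S_K)\otimes\QQ\congpf\CH^*(S'_K)\otimes\QQ$. Your component-by-component verification (killing $v_0$ by $\deg\alpha=0$, $v_1$ by the K\"unneth splitting of $\Pic(S\times S')$, and $v_3,v_4$ by codimension) is just an explicit unwinding of the paper's one-line reason that the cohomologically trivial part of the Chow ring of a K3 surface is concentrated in degree two. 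The one place you diverge --- the ``Picard-jumping'' case, which you flag as the main technical obstacle and treat only sketchily --- is vacuous here: since $k$ is algebraically closed, $\rho(S)=\rho(S_K)$ for every extension $K/k$, so $\hh^2_{\rm tr}(S)_K\cong\hh^2_{\rm tr}(S_K)$ and $\CH^*_\QQ(\hh^2_{\rm tr}(S)_K)=\CH^2(S_K)_0\otimes\QQ$ with no extra algebraic block; this is exactly why the theorem carries the hypothesis on $k$ (or, in the weakened form, $\rho(S)=\rho(S_{\bar k})$), and you could simply delete that paragraph.
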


\begin{proof}
Pick an equivalence $\Phi_\ke\colon\Db(S)\congpf\Db(S')$ and consider  the Mukai vector
$v\coloneqq v(\ke)=\oplus v_i(\ke)\in\bigoplus\CH^i(S\times S')\otimes\QQ$ of its kernel. The degree two part $v_2(\ke)$
induces a morphism $f\colon\hh(S)\to\hh(S')$, which
decomposes as
$$f=\left(\begin{matrix}f_{\rm tr}&f_{\rm alg}^{\rm tr}\\f_{\rm tr}^{\rm alg}&f_{\rm alg}\end{matrix}\right)\colon\hh^2_{\rm tr}(S)\oplus\hh_{\rm alg}(S)\to\hh^2_{\rm tr}(S')\oplus\hh_{\rm alg}(S').$$

We show that $f_{\rm tr}^{\rm tr}\colon\hh^2_{\rm tr}(S)\to\hh^2_{\rm tr}(S')$ is an isomorphism.
According to  Lemma \ref{lem:folk}, it suffices to show that for all field extensions $K/k$
the induced map $f^{\rm tr}_{{\rm tr}K*}\colon\CH^*(\hh^2_{\rm tr}(S)_K)\congpf\CH^*(\hh^2_{\rm tr}(S')_K)$
is bijective. The two  sides are the cohomologically trivial parts of $\CH^*(S)\otimes\QQ$ and $\CH^*(S')\otimes\QQ$,
respectively, and hence $f_{{\rm tr}K*}^{\rm tr}=f_{K*}=v_2(\ke_K)_*$. However, as the cohomologically trivial
part of the Chow ring of a K3 surface is concentrated in degree two, the action  of $v_2(\ke_K)$ on $\CH^2(S_K)_0$ coincides with
the action of $v(\ke_K)$ on it. As $\ke_K$ is the base change of the Fourier--Mukai kernel of an equivalence,
it again describes an equivalence and, therefore, induces an ungraded isomorphism
$\CH^*(S_K)\otimes\QQ\congpf\CH^*(S'_K)\otimes \QQ$. This shows the bijectivity of $f_{{\rm tr}K*}^{\rm tr}$.
\end{proof}

\begin{remark} 
Assume the classes $c\in\CH^2(S)\otimes\QQ$ and $c'\in\CH^2(S')\otimes\QQ$ are the Beauville--Voisin classes
of $S$ and $S'$, respectively. Then, using \cite{HuyK3}, we know that $v(\ke)$ induces an ungraded isomorphism
$R(S)\cong R(S')$ (of groups) between the Beauville--Voisin rings. Here, $R(S)\subset\CH^*(S)\otimes\QQ$
is the $\QQ$-subalgebra generated by $\CH^1(S)$. We show that this implies that $f$ is in fact
diagonal
$$f=\left(\begin{matrix}f_{\rm tr}&0\\0&f_{\rm alg}\end{matrix}\right),$$ i.e.\ $f_{\rm tr}^{\rm alg}=0$ and $f_{\rm alg}^{\rm tr}=0$. Clearly,
$f_{{\rm alg}K*}^{{\rm tr}}\colon\CH^*(S_K)_0\otimes\QQ\to R(S'_K)\,\hookrightarrow H^*(S')$ is trivial
and, using \cite{HuyK3}, also $f_{{\rm tr}K*}^{\rm alg}\colon R(S)\to\CH^2(S_K)_0$ vanishes. However, to conclude
one needs to show that $p'_{\rm alg}\circ v_2\circ p_{\rm tr}=0=p'_{\rm tr}\circ v_2\circ p_{\rm alg}$ and both are implied
by $p'_{\rm alg}\circ v_2=v_2\circ p_{\rm alg}$. The latter holds in $H^*(S\times S')$ and as 
both sides are contained in $R(S)\otimes R(S')$ which injects into $H^*(S\times S')$, this suffices to conclude.


\end{remark}
\section{Applications and further comments}

We conclude by a few consequences and possible generalizations of the result.

\subsection{} Assume the two derived equivalent K3 surfaces $S$ and $S'$ are defined over a finite field $\FF_q$. 
Then due to \cite{LO}, using crystalline cohomology, and to \cite[Prop.\ 16.4.6]{HuyK3}, using \'etale cohomology, the Zeta functions of $S$ and $S'$ are shown to coincide, $Z(S,t)=Z(S',t)$.
In particular, $$\Db(S)\cong\Db(S')\Rightarrow |S(\FF_q)|=|S'(\FF_q)|.$$

This can now also be seen as a consequence of Theorem \ref{thm:main}. 

\subsection{} Another approach to capture the motivic nature of a K3 surface $S$ is to consider
its class $[S]\in K_0({\rm Var}(k))$ in the Grothendieck ring of varieties.
To  compare $[S]$ with the Chow motive $\hh(S)\in\Mot(k)$ or its class $[\hh(S)]$ in the Grothendieck
ring $$K(\Mot(k))\coloneqq\ZZ[\Mot(k)]/([M\oplus N]-[M]-[N]),$$ one uses the 
motivic Euler characteristic with compact support $\chi_\mot^c$, see  \cite{GS,GNA}
or \cite{B} for a much easier proof in characteristic zero. This
is a ring homomorphism
$$\chi_\mot^c\colon K_0({\rm Var}(k))\to K(\Mot(k)),$$
satisfying
$\chi^c_\mot([X])=[\hh(X)]$ for any smooth projective variety $X$
and $\chi_\mot^c(\ell)=[\LL]$, where $\ell\coloneqq[{\mathbb A}^1]$.

Theorem \ref{thm:main} shows that $\hh(S)\cong\hh(S')$ in $\Mot(k)$ for derived equivalent K3 surfaces
$S$ and $S'$ and, therefore, $[\hh(S)]=[\hh(S')]$ in $K(\Mot(k))$. This immediately yields

\begin{cor} Let $S$ and $S'$ be two derived equivalent K3 surfaces and let $[S]$ and $[S']$
be their classes in $K_0({\rm Var}(k))$. Then
$$\chi_\mot^c([S])=\chi_\mot^c([S'])\in K(\Mot(k)).$$
\vskip-0.8cm\qqed
\end{cor}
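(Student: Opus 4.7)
The plan is to deduce the corollary as a direct consequence of Theorem \ref{thm:main} combined with the defining property of the motivic Euler characteristic with compact support, so there is essentially no new work to do beyond assembling three facts.

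First, I would invoke Theorem \ref{thm:main} to obtain an isomorphism $\hh(S)\cong\hh(S')$ in the category $\Mot(k)$. Passing to the Grothendieck ring $K(\Mot(k))$ of the pseudo-abelian category of Chow motives, isomorphic objects have the same class, so this yields the identity $[\hh(S)]=[\hh(S')]$ in $K(\Mot(k))$.

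Second, I would use the key property of the ring homomorphism $\chi^c_\mot\colon K_0({\rm Var}(k))\to K(\Mot(k))$ recalled in the subsection above, namely that $\chi^c_\mot([X])=[\hh(X)]$ for any smooth projective variety $X$ (existence of such a homomorphism is the substantive input, supplied by \cite{GS,GNA}, or in characteristic zero by \cite{B}; we are just citing it). Since K3 surfaces are smooth projective, specialising to $X=S$ and $X=S'$ gives $\chi^c_\mot([S])=[\hh(S)]$ and $\chi^c_\mot([S'])=[\hh(S')]$.

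Concatenating these two identities produces $\chi^c_\mot([S])=[\hh(S)]=[\hh(S')]=\chi^c_\mot([S'])$ in $K(\Mot(k))$, which is precisely the claim. There is no genuine obstacle: the entire content has already been absorbed into Theorem \ref{thm:main} and into the construction of $\chi^c_\mot$ cited from \cite{GS,GNA,B}. The only thing to check is that one is allowed to read off classes of smooth projective varieties via $\chi^c_\mot$ as the classes of their Chow motives, which is exactly the compatibility that characterises this homomorphism.
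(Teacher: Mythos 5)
Your proposal is correct and is essentially identical to the paper's own argument: the paper likewise deduces $[\hh(S)]=[\hh(S')]$ in $K(\Mot(k))$ from Theorem \ref{thm:main} and then concludes via the property $\chi^c_\mot([X])=[\hh(X)]$ of the motivic Euler characteristic cited from \cite{GS,GNA,B}. Nothing is missing.
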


As $\chi_\mot^c$ factorizes via the localization
$K_0({\rm Var}(k))[\ell^{-1}]$, the corollary would also follow from
$[S]=[S']$ in $K_0({\rm Var}(k))[\ell^{-1}]$, i.e.\ 
$([S]-[S'])\cdot\ell^n=0$ in $K_0({\rm Var}(k))$ for some $n>0$. In fact,
in  the examples studied in \cite{HL,IMOU,KS},  dealing with K3 surfaces  of degree $8$ and $12$,
$([S]-[S'])\cdot \ell=0$. Note that $[S]=[S']$ in $K_0({\rm Var}(k))$\footnote{or, weaker,
in $K_0({\rm Var}(k))/(\ell)$ which in characteristic zero
is isomorphic to the Grothendieck ring of stable birational classes
of smooth projective varieties $\ZZ[{\rm SB}(k)]$, see \cite{LL}.} if and only if $S\cong S'$, see \cite{LS}.
In particular, the kernel of $$K_0({\rm Var}(k))\to K(\Mot(k))$$ is certainly non-trivial. However, to the best
of my knowledge, nothing is known about the kernel of $$K_0({\rm Var}(k))[\ell^{-1}]\to K(\Mot(k)).$$

\subsection{} Theorem \ref{thm:main} clearly implies that for two derived equivalent K3 surfaces $S$ and
$S'$, one is finite-dimensional (in the sense of Kimura--O'Sullivan) if and only if the other one is. This was
already observed in \cite[Prop.\ 1.5]{DPPed}. In fact, in this paper Theorem \ref{thm:main}
was shown assuming finite-dimensionality
of $\hh(S)$, which however is only known in very few cases.

\subsection{} As Charles Vial explained to me, Theorem \ref{thm:main} can be
generalized to arbitrary  surfaces. Indeed, it can be
shown that the odd degree part $\hh^1\oplus\hh^3$ is a derived invariant. 
For details see \cite{Honig} and the forthcoming paper of Achter,
Casalaina-Martin, and Vial, where the more complicated
situation of derived equivalences of threefolds is studied.

He also explained to me that Lemma \ref{lem:folk} can be avoided by showing
that the isomorphism of ungraded Chow motives induced by $v(\ke)$  necessarily maps
$\hh_{\rm tr}^2(S)$ into $\hh_{\rm tr}^2(S')$. The inverse is then given by $v(\ke^*)$.


\end{document}